\documentclass[11pt]{amsart}
\usepackage{amsthm,wrapfig,amsmath,amssymb,amsfonts,setspace,verbatim,graphicx,mathtools,mathrsfs,commath,float,mdframed,frame,xcolor,qtree,enumitem, ulem}
\usepackage[hidelinks]{hyperref}
\usepackage[T1]{fontenc}

\usepackage[margin=1.35in]{geometry}

\newtheorem{ut}{Theorem}
\numberwithin{ut}{section}

\newtheorem{uc}[ut]{Corollary}

\newtheorem{uq}{Question}

\theoremstyle{remark}
\newtheorem{ur}{Remark}
\numberwithin{ur}{section}

\theoremstyle{definition}

\begin{document}

\title[A note on the topology of escaping endpoints]{A note on the topology of escaping endpoints}

\subjclass[2010]{37F10, 30D05, 54E52} 
\keywords{Julia set, escaping endpoints, almost zero-dimensional}
\address{Department of Mathematics, Auburn University at Montgomery, Montgomery 
AL 36117, United States of America}
\email{dsl0003@auburn.edu,dlipham@aum.edu}
\author{David S. Lipham}

\begin{abstract}
 We  study topological properties of  the escaping endpoints and fast escaping endpoints of  the Julia set of complex exponential $\exp(z)+a$  when  $a\in (-\infty,-1)$.  We show neither space is homeomorphic to the whole set of endpoints. This follows from a general result stating that for every transcendental entire function $f$,  the escaping  Julia set $I(f)\cap J(f)$ is first category. 
\end{abstract}

\maketitle

\section{Introduction}

For each $a\in (-\infty,-1)$ define $f_a:\mathbb C\to \mathbb C$ by  $f_a(z)=e^z+a$.  The Julia set $J(f_a)$ is known to be a \textit{Cantor bouquet} consisting of an uncountable union of pairwise disjoint rays, each joining a finite endpoint to the point at infinity \cite[p.50]{dev}; see Figure 1. Let $E(f_a)$ denote the set of finite endpoints of these rays.  Mayer  proved $E(f_a)\cup \{\infty\}$ is connected, even though $E(f_a)$ is totally disconnected \cite{may}.  The one-point compactification $J(f_a)\cup \{\infty\}$ is a Lelek fan \cite{aa}, so $E(f_a)$ is actually homeomorphic to the ``irrational Hilbert space'' $\mathfrak E_{\mathrm{c}}:=\{x\in \ell^2:x_i\notin \mathbb Q\text{ for each }i<\omega\}$ \cite{31}, which is \textit{almost zero-dimensional} \cite{ov2,erd}.  This means  $E(f_a)$ has a basis of open sets whose closures are intersections of clopen sets. We note that by \cite[Theorem 3.1]{lip} and \cite{aa,bul, cha},  an almost zero-dimensional space $X$ has a one-point connectification if and only if $X$ is homeomorphic to a dense set $X'\subseteq E(f_a)$ with the property that  $X'\cup \{\infty\}$ is connected.

Alhabib and Rempe-Gillen recently discovered that $\dot E(f_a)\cup \{\infty\}$ is connected, where $\dot E(f_a)$  is  the set of \textit{escaping endpoints} of $J(f_a)$ \cite[Theorem 1.4]{rem}.   The even smaller set of \textit{fast escaping endpoints} $\ddot E(f_a)$ also has the property that its union with $\{\infty\}$ is connected \cite[Remark  p.68]{rem}.  More can be said about the topologies of  $\dot E(f_a)\cup \{\infty\}$ and $\ddot E(f_a)\cup \{\infty\}$ based on \cite{lip}. For example $\dot E(f_a)\cup \{\infty\}\setminus K$ is connected for every $\sigma$-compact set $K\subseteq \dot E(f_a)$ \cite[Theorem 4.7]{lip}. The primary goal of this paper is to investigate whether $\dot E(f_a)$ and $\ddot E(f_a)$ are topologically equivalent to  $\mathfrak E_{\mathrm{c}}$ or the ``rational Hilbert space'' $\mathfrak E:=\{x\in \ell^2:x_i\in\mathbb Q\text{ for each }i<\omega\}$. We show both sets are first category in themselves, implying   neither space is homeomorphic to  $\mathfrak E_{\mathrm{c}}$. We also show $\ddot E(f_a)\not\simeq \mathfrak E$.  It is presently  unknown whether $\dot E(f_a)$ is homeomorphic to $\mathfrak E$.

\section{Preliminaries}

Let $f$ be an entire function. 
\begin{itemize}\renewcommand{\labelitemi}{\scalebox{.5}{{$\blacksquare$}}}

\item A set $X\subseteq \mathbb C$ is:
\subitem\textit{backward-invariant} under  $f$ provided $f^{-1}[X]\subseteq X$;
\subitem\textit{forward-invariant} under  $f$ provided $f[X]\subseteq X$; and
\subitem\textit{completely invariant} under $f$ if $f^{-1}[X]\cup f[X]\subseteq X$.  

\item The \textit{backward orbit} of a point $z\in \mathbb C$ is the union of pre-images $$O^-(z)=\bigcup\{f^{-n}\{z\}:n<\omega\}.$$  The \textit{forward orbit} of $z$ is the set $O^+(z)=\{f^n(z):n<\omega\}$.

\item A point $z\in \mathbb C$ is \textit{exceptional} if $O^-(z)$ is finite.  There is at most one exceptional point  \cite[p.6]{ber2}. 

\item $I(f)=\{z\in \mathbb C:f^n(z)\to\infty\}$ is called the  \textit{escaping set} for  $f$.

\item Define the maximum modulus function
$M(r) := M(r, f) = \max \{|f(z)|:|z|=r\}$ for $r \geq 0$. 
Choose $R > 0$ sufficiently large such that $M^n(R) \to +\infty$ as $n \to\infty$ and  let 
$A_R(f) = \{z \in \mathbb C : |f^n(z)| \geq M^n(R)\text{ for all }n \geq0\}$. The \textit{fast escaping set} for $f$ is defined to be the increasing union of closed sets 
$$A(f) =\bigcup _{n\geq 0}   f^{-n}[A_R(f)].$$
It can be shown that the definition of $A(f)$ is independent of the choice of $R$ when $f$ is transcendental \cite[Theorem 2.2]{rip}.  

\item Note that $J(f)$, $I(f)$, and  $A(f)$   are completely invariant under $f$.

\end{itemize}

Recall that for each $a\in (-\infty,-1)$,  the endpoint set of $J(f_a)$ is denoted  $E(f_a)$. We let  \begin{align*}\dot E(f_a)&=I(f_a)\cap E(f_a)\text{; and}\\
\ddot E(f_a)&=A(f_a)\cap E(f_a)
\end{align*} denote  the  \textit{escaping endpoints} and \textit{fast escaping endpoints} of $J(f_a)$, respectively. 
\begin{figure}[h]
	\centering
	\includegraphics[scale=0.22]{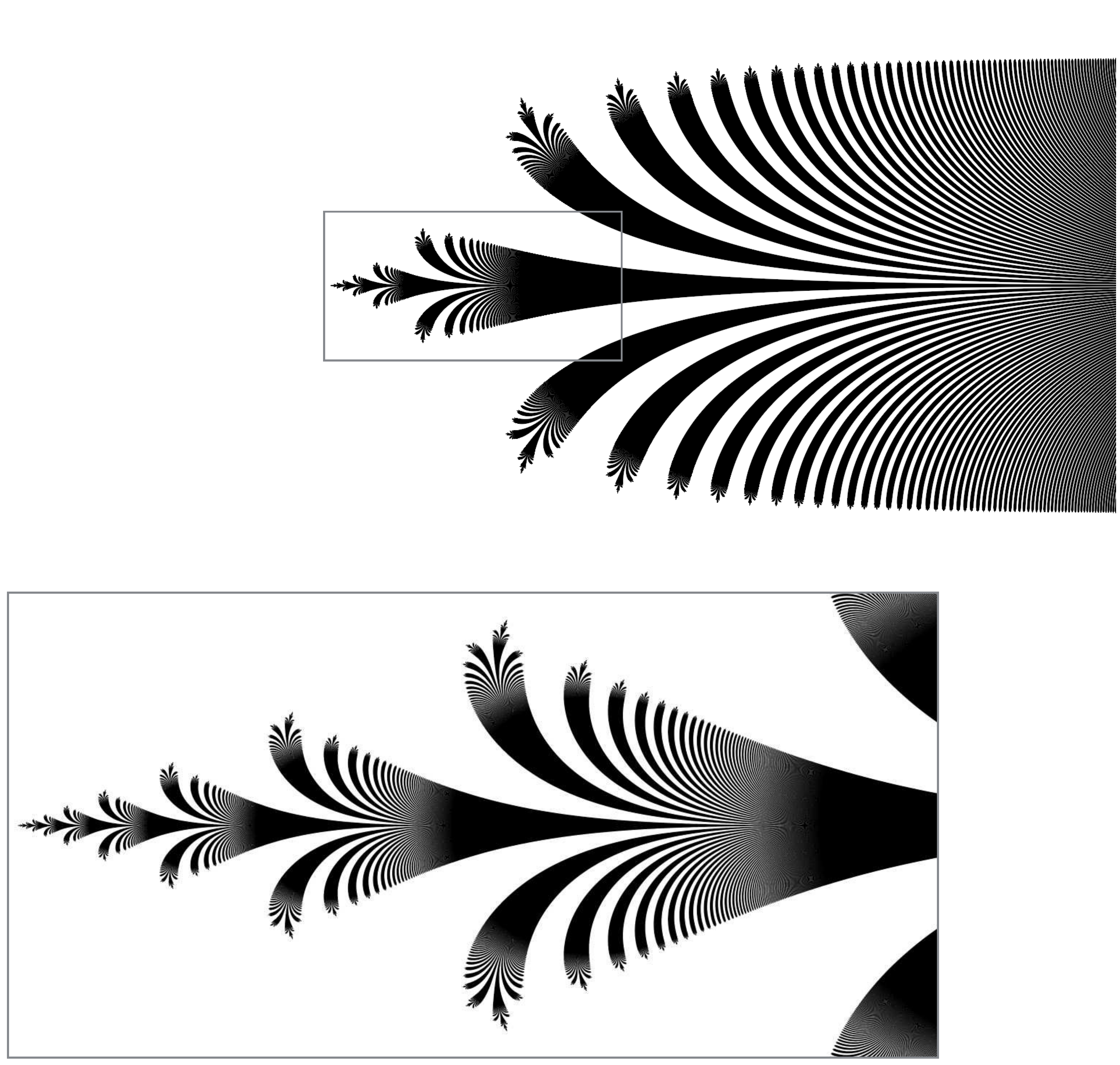}
	\caption{Partial images of $J(f_{-2})$}
\end{figure}

\section{Results for transcendental entire functions}

In this section we assume $f$ is a transcendental entire function, so that $I(f)\cap J(f)\neq\varnothing$ \cite[Theorem 2]{ere}.  We will make repeated uses of \cite[Lemma 4]{ber2}, which states that $\overline{O^-(z)}=J(f)$ for each non-exceptional point $z\in J(f)$. This is a  simple consequence of Montel's theorem. A topological space $X$ is \textit{first category} if $X$  can be written as the union of countably many (closed) nowhere dense subsets.  

\begin{ut}Every completely invariant subset of $I(f)\cap J(f)$ is first category.\end{ut}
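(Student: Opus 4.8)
The plan is to exhibit a countable cover of $X$ by closed nowhere dense subsets. Throughout fix a radius $R>0$ large enough that the disc $D_R=\{z:|z|<R\}$ meets $J(f)$, put $G_N=\bigcup_{n\ge N}f^{-n}[D_R]$ (an open subset of $\mathbb C$), and set
\[ X_N=X\setminus G_N=\{z\in X:|f^n(z)|\ge R\text{ for all }n\ge N\}. \]
Each $X_N$ is closed in $X$ because $G_N$ is open. Since $X\subseteq I(f)$, every $z\in X$ satisfies $f^n(z)\to\infty$, so $|f^n(z)|\ge R$ for all large $n$; hence $X=\bigcup_N X_N$. It therefore suffices to prove that each $X_N$ is nowhere dense in $X$, i.e. that its complement $X\cap G_N$ is dense in $X$.

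The first key step is that \emph{$X$ is dense in $J(f)$} whenever $X\ne\varnothing$ (the empty set being trivially first category). Indeed, pick $w\in X$; if $w$ were exceptional we could replace it by $f(w)\in X$ (forward invariance), and $f(w)$ is never exceptional since it lies in $\ran(f)$ and hence has infinitely many preimages. Thus $X$ contains a non-exceptional point $w_0$, and backward invariance gives $O^-(w_0)\subseteq X$. By \cite[Lemma 4]{ber2}, $\overline{O^-(w_0)}=J(f)$, so $\overline X\supseteq J(f)$ and $X$ is dense in $J(f)$.

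The second key step applies the same lemma to show \emph{$G_N$ is dense in $J(f)$}. The set $J(f)\cap f^{-N}[D_R]$ is a nonempty open subset of the perfect set $J(f)$: it is nonempty because, by Picard's theorem applied to the transcendental entire map $f^N$, some $q\in J(f)\cap D_R$ is not the omitted value of $f^N$, so $f^{-N}\{q\}\subseteq J(f)$ is nonempty. Being open in a perfect set, it is infinite, hence contains a non-exceptional point $q'$ with $f^N(q')\in D_R$. For every $v\in O^-(q')$ there is $j\ge 0$ with $f^j(v)=q'$, whence $f^{\,j+N}(v)=f^N(q')\in D_R$ and $v\in G_N$; thus $O^-(q')\subseteq G_N$, and $\overline{G_N}\supseteq\overline{O^-(q')}=J(f)$.

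Finally I combine the two density statements. Given any open $U\subseteq\mathbb C$ with $U\cap X\ne\varnothing$, density of $X$ in $J(f)$ gives $U\cap J(f)\ne\varnothing$; density of $G_N$ then makes $U\cap G_N\cap J(f)$ a nonempty open-in-$J(f)$ set; and density of $X$ in $J(f)$ again produces a point of $X$ inside it, so $U\cap G_N\cap X\ne\varnothing$. Hence $X\cap G_N$ is dense in $X$, each $X_N$ is nowhere dense, and $X$ is first category. The step I expect to be the main obstacle is exactly this transfer from ``meager in $J(f)$'' to ``meager in itself'': a subset meager in the ambient space need not be meager as a space on its own, and it is the density of \emph{every} nonempty completely invariant $X$ in the whole Julia set that bridges the gap. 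The only delicate subsidiary point is the at-most-one exceptional point, dispatched by passing from $w$ to $f(w)$, which is where forward invariance (and thus full invariance) is genuinely needed.
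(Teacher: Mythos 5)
Your overall strategy works and is genuinely different from the paper's in its key step, but one justification as written is false and must be repaired. You claim $f(w)$ can never be exceptional because ``it lies in $\ran(f)$ and hence has infinitely many preimages.'' That inference is wrong: Picard's theorem allows the omitted value of a transcendental entire function to be attained a finite, nonzero number of times, so membership in $\ran(f)$ does not give infinitely many preimages. Concretely, for $f(z)=ze^z$ the point $0=f(0)$ lies in the range, has the single preimage $0$, and is itself exceptional; so ``$w$ exceptional $\Rightarrow f(w)$ non-exceptional'' fails as a general statement. The step is easily rescued in your setting: since $w\in X\subseteq I(f)$, $w$ cannot be a fixed point, so $f(w)\neq w$; as there is at most one exceptional point and $w$ is assumed exceptional, $f(w)$ is not exceptional. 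Alternatively, argue as the paper does: $O^+(w)\subseteq X$ is infinite because $w$ escapes, and at most one point of it can be exceptional, so $X$ contains a non-exceptional point outright.

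With that one-line repair, your proof is correct, and its architecture differs from the paper's in an interesting way. Both proofs use the same decomposition $X=\bigcup_N X_N$ and both rest on \cite[Lemma 4]{ber2}, but the paper proves nowhere-density of each $X_n$ by a local argument inside an arbitrary open $U\subseteq J(f)$: it must delete the finitely many sets $f^{-k}\{z_2\}$, $k<n$, from $U$, and to know this leaves a nonempty open set it invokes the density of repelling periodic points (\cite[Theorem 4]{ber2}) to conclude that $J(f)\setminus I(f)$ is dense in $J(f)$. You replace this with two global density claims --- $X$ is dense in $J(f)$, and the open set $G_N$ is dense in $J(f)$ (via the backward orbit of an auxiliary non-exceptional point $q'$ that need not lie in $X$) --- combined through the openness of $G_N$ in $\mathbb C$. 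This avoids \cite[Theorem 4]{ber2} entirely, at the cost of a second application of Picard's theorem (to $f^N$) to produce $q'$. The ``transfer'' you worried about is handled correctly: precisely because $G_N$ is open in the plane, density of $G_N\cap J(f)$ in $J(f)$ together with density of $X$ in $J(f)$ does yield density of $X\cap G_N$ in $X$, which is what nowhere-density of the relatively closed set $X_N$ in $X$ requires.
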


\begin{proof} Let $X\subseteq I(f)\cap J(f)$ be completely invariant under $f$. Let $R=|z_0|+1$ for some $z_0\in J(f)$. For each $n<\omega$ let $X_n=\{z\in X:|f^k(z)|\geq R\text{ for all }k\geq n\}.$  Since $X\subseteq I(f)$, we have $X=\bigcup \{X_n:n<\omega\}.$  It remains to show each $X_n$ is nowhere dense in $X$. To that end, fix $n<\omega$.   Let $U$ be any open subset of $J(f)$ such that $U\cap X\neq\varnothing$. We will  show $U\cap X\not\subseteq X_n$.  

For any point $z\in I(f)$ the forward orbit $O^+(z)$ is infinite. Since $X$ is forward-invariant, it contains $O^+(z)$ when $z\in X$. We assume $X$ is non-empty, so   $X$ is infinite.  There is at most one exceptional point by Picard's theorem,  so there is a non-exceptional point $z_1\in X$.  By \cite[Lemma 4]{ber2}, $O^-(z_1)$ contains a dense subset of  $\{z\in J(f): |z|<R\}$, which is a perfect set \cite[Theorem 3]{ber2}.  So there  is a non-exceptional point $z_2\in  O^-(z_1)$ with $|z_2|<R$.  The set of repelling periodic points is a dense subset of $J(f)$  \cite[Theorem 4]{ber2}.   Since $I(f)$ contains no periodic point,  we have $\overline{J(f)\setminus I(f)}=J(f)$. For each $k<\omega$ we also note that $f^{-k}\{z_2\}$ is closed and  $f^{-k}\{z_2\}\subseteq I(f)$. So each pre-image $f^{-k}\{z_2\}$ is nowhere dense in $J(f)$. Therefore  $V:=U\setminus\bigcup \{f^{-k}\{z_2\}:0\leq k<n\}$ is a non-empty open subset of $J(f)$. By \cite[Lemma 4]{ber2}    there exists $k<\omega$ such that $f^{-k}\{z_2\}\cap V\neq\varnothing$. Then $k\geq n$ and $f^{-k}\{z_2\}\cap U\neq\varnothing$. Let $z_3\in f^{-k}\{z_2\}\cap U$. Then $|f^{k}(z_3)|=|z_2|<R$, so $z_3\notin X_n$. Since $X$ is backward-invariant,   $z_3\in (U\cap X)\setminus X_n$ as desired.  Clearly $X_n$ is a relatively closed subset of $X$. We conclude that $(U\cap X)\setminus X_n$ is a non-empty $X$-open subset of $U\cap X$ missing $X_n$.   Recall $U$ was an arbitrary open subset of $J(f)$ intersecting $X$, so this proves $X_n$ is nowhere dense in $X$. \end{proof}

\begin{uc}$I(f)\cap J(f)$ is first category.\end{uc}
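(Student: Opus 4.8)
The plan is to obtain this as an immediate consequence of the preceding Theorem by applying it to $X = I(f)\cap J(f)$ itself. Since the Theorem guarantees that every completely invariant subset of $I(f)\cap J(f)$ is first category, I only need to observe that $I(f)\cap J(f)$ is a completely invariant subset of itself, and then the conclusion is automatic.

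To verify the complete invariance, I would appeal to the fact recorded in the Preliminaries that $I(f)$ and $J(f)$ are each completely invariant under $f$, together with the elementary observation that complete invariance is preserved under intersection. Concretely, forward invariance follows from the containment $f[I(f)\cap J(f)]\subseteq f[I(f)]\cap f[J(f)]\subseteq I(f)\cap J(f)$, and backward invariance from the equality $f^{-1}[I(f)\cap J(f)]=f^{-1}[I(f)]\cap f^{-1}[J(f)]\subseteq I(f)\cap J(f)$. Thus $I(f)\cap J(f)$ is completely invariant under $f$.

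With this in hand, the Theorem applied to $X=I(f)\cap J(f)$ immediately yields that $I(f)\cap J(f)$ is first category, which completes the proof. I do not expect any genuine obstacle here: essentially the entire content of the corollary is already packaged into the Theorem, and the only supplementary step is the routine set-theoretic check that the intersection of two completely invariant sets is again completely invariant, which rests on the standard relations $f[A\cap B]\subseteq f[A]\cap f[B]$ and $f^{-1}[A\cap B]=f^{-1}[A]\cap f^{-1}[B]$.
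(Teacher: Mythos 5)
Your proposal is correct and follows exactly the paper's route: the paper likewise deduces the corollary by applying the preceding theorem to $X=I(f)\cap J(f)$, citing its complete invariance (recorded in the Preliminaries). Your extra verification that the intersection of completely invariant sets is completely invariant is a routine but valid filling-in of that citation.
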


\begin{proof}Theorem 3.1 applies since $I(f)\cap J(f)$ is completely invariant under $f$.  \end{proof}

\begin{uc}$J(f)\setminus I(f)$ is not first category.\end{uc}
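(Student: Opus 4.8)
The plan is to combine Corollary~3.2 with the Baire category theorem. First I would note that $J(f)$ is a nonempty closed subset of $\mathbb C$, hence a completely metrizable space, so by the Baire category theorem $J(f)$ is not first category in itself. Writing $J(f)=(I(f)\cap J(f))\cup(J(f)\setminus I(f))$, it then suffices to rule out the possibility that $J(f)\setminus I(f)$ is first category: if both pieces of this decomposition were first category in $J(f)$, then $J(f)$ itself would be first category in itself, a contradiction.

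The first ingredient is that $I(f)\cap J(f)$ is first category not merely in itself but in the ambient space $J(f)$. For this I would revisit the proof of Theorem~3.1 with $X=I(f)\cap J(f)$ and set $Y_n=\{z\in J(f):|f^k(z)|\geq R\text{ for all }k\geq n\}$, a subset of $J(f)$ that is closed and contains $X_n$. The argument given there produces, inside every nonempty open $U\subseteq J(f)$, a preimage $z_3\in f^{-k}\{z_2\}\cap U$ with $k\geq n$ and $|f^k(z_3)|=|z_2|<R$; since the construction of $z_3$ never used the hypothesis $U\cap X\neq\varnothing$, it shows that each $Y_n$ has empty interior in $J(f)$ and is therefore nowhere dense in $J(f)$. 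Hence $I(f)\cap J(f)=\bigcup_{n<\omega}X_n\subseteq\bigcup_{n<\omega}Y_n$ is first category in $J(f)$.

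The remaining point, which I expect to be the main obstacle, is reconciling the intrinsic notion of ``first category'' (first category in itself) used in the statement with the relative notion ``first category in $J(f)$''. Here I would use the density $\overline{J(f)\setminus I(f)}=J(f)$, already recorded in the proof of Theorem~3.1 from the fact that repelling periodic points are dense in $J(f)$ while $I(f)$ contains none. For a dense subspace $D$ of any space $X$ and any $N\subseteq D$, the set $N$ is nowhere dense in $D$ if and only if it is nowhere dense in $X$: one has $\overline{N}^{D}=\overline{N}^{X}\cap D$, a nonempty $X$-open set contained in $\overline{N}^{X}$ meets $D$ in a nonempty $D$-open subset of $\overline{N}^{D}$, and conversely a nonempty $D$-open subset of $\overline{N}^{D}$ lies in an $X$-open set whose closure is forced into $\overline{N}^{X}$ by density. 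Applying this with $D=J(f)\setminus I(f)$ shows that $J(f)\setminus I(f)$ is first category in itself precisely when it is first category in $J(f)$. So if $J(f)\setminus I(f)$ were first category in itself, then together with the previous paragraph $J(f)$ would be a union of two sets that are first category in $J(f)$, hence first category in itself, contradicting the Baire category theorem. Therefore $J(f)\setminus I(f)$ is not first category.
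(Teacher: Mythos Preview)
Your proof is correct and follows the same skeleton as the paper's: $J(f)$ is closed in $\mathbb C$, hence Baire, and decomposes as $(I(f)\cap J(f))\cup(J(f)\setminus I(f))$; since the first piece is first category, the second cannot be.

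Where you differ is in the care taken to pass from ``first category in itself'' to ``first category in $J(f)$''. The paper simply writes that $J(f)$ ``is therefore not the union of two first category sets'' and moves on, implicitly invoking the general fact that a subset which is first category \emph{in itself} is automatically first category in any ambient space. (If $N\subseteq A\subseteq X$ is nowhere dense in $A$ and $U\subseteq\overline{N}^{X}$ is open in $X$, then $U$ meets $N\subseteq A$, so $U\cap A$ is a nonempty $A$-open subset of $\overline{N}^{X}\cap A=\overline{N}^{A}$, a contradiction.) Because this implication holds without any density hypothesis, your two extra ingredients---revisiting the proof of Theorem~3.1 to show directly that $I(f)\cap J(f)$ is first category in $J(f)$, and invoking the density of $J(f)\setminus I(f)$ in $J(f)$ for the other piece---are not needed. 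They are correct, just superfluous: Corollary~3.2 plus the general fact above already give everything required.
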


\begin{proof}$J(f)$ is a closed subset of $\mathbb C$, and is therefore not the union of two first category sets. Since $I(f)\cap J(f)$ is first category (Corollary 3.2), $J(f)\setminus I(f)$ is not.  \end{proof}

\section{Applications  to complex exponentials $f_a$}

\begin{ut}$I(f_a)$, $A(f_a)$,  $\dot E(f_a)$ and $\ddot E(f_a)$  are first category.\end{ut}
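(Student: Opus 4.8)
The plan is to reduce all four assertions to a single application of Theorem 3.1, by checking that each of $I(f_a)$, $A(f_a)$, $\dot E(f_a)$, and $\ddot E(f_a)$ is a \emph{completely invariant subset of $I(f_a)\cap J(f_a)$}. Since $f_a$ is transcendental entire, Theorem 3.1 is available and yields first category in the relevant subspace. The first thing I would record is that $I(f_a)\subseteq J(f_a)$ for $a\in(-\infty,-1)$: the Fatou set is the basin of the attracting fixed point, on which every orbit is bounded and hence non-escaping. Consequently $I(f_a)=I(f_a)\cap J(f_a)$, and since $A(f_a)\subseteq I(f_a)$ always holds, we get $A(f_a)\subseteq I(f_a)\cap J(f_a)$ as well. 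Both $I(f_a)$ and $A(f_a)$ are completely invariant by the preliminaries, so Theorem 3.1 immediately gives that each is first category.

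For the endpoint sets I would write $\dot E(f_a)=I(f_a)\cap E(f_a)$ and $\ddot E(f_a)=A(f_a)\cap E(f_a)$. Because $E(f_a)\subseteq J(f_a)$ by definition, and $I(f_a),A(f_a)\subseteq I(f_a)\cap J(f_a)$ from the previous paragraph, both intersections lie inside $I(f_a)\cap J(f_a)$. It then remains only to verify that $E(f_a)$ is completely invariant under $f_a$; granting this, each intersection of completely invariant sets is completely invariant, and Theorem 3.1 applies directly to $\dot E(f_a)$ and $\ddot E(f_a)$, finishing the proof.

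The main obstacle is exactly the complete invariance of $E(f_a)$, and I would want to stress \emph{why} this step cannot be skipped. One is tempted to argue that $\dot E(f_a)$ and $\ddot E(f_a)$ are subsets of the first category set $I(f_a)\cap J(f_a)$ and therefore first category themselves. This is fallacious: first category (meagerness) is not hereditary to subspaces, since a subspace entirely contained in the closure of a nowhere dense set of the ambient space can be comeager, even open, in itself. Thus we genuinely need Theorem 3.1 to act on the subspace $\dot E(f_a)$ (resp.\ $\ddot E(f_a)$) in its own right, which is precisely what complete invariance licenses.

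To establish that $E(f_a)$ is completely invariant I would appeal to the hair/external-address description of the Cantor bouquet $J(f_a)$. Under this description $f_a$ permutes the hairs and acts as a shift on addresses, carrying the endpoint of each hair to the endpoint of its image hair; forward invariance $f_a[E(f_a)]\subseteq E(f_a)$ follows at once. For backward invariance one notes that each endpoint has, for every symbol, exactly one preimage that is again an endpoint (the shift is surjective on addresses), so $f_a^{-1}[E(f_a)]\subseteq E(f_a)$ as well. With both inclusions in hand, $E(f_a)$ is completely invariant, the four sets are completely invariant subsets of $I(f_a)\cap J(f_a)$, and Theorem 3.1 completes the argument.
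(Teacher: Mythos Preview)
Your proposal is correct and follows essentially the same route as the paper: show that each of the four sets is a completely invariant subset of $I(f_a)\cap J(f_a)=I(f_a)$ and then invoke Theorem~3.1. The paper's own proof is terser---it simply asserts the complete invariance (with a nod in Remark~4.1 to its use in \cite{rem}) and cites \cite{el} for $I(f_a)\subseteq J(f_a)$---while you add the helpful justification for the complete invariance of $E(f_a)$ and the warning that meagerness is not hereditary.
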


\begin{proof}These are completely invariant subsets of $I(f_a)$.  And $I(f_a)\subseteq J(f_a)$; this actually holds for all $a\in \mathbb C$ \cite[Section 2]{el}.  So Theorem 3.1 applies to each set.\end{proof}

\begin{ur}Complete invariance of $\dot E(f_a)$ was also applied in \cite[p.68]{rem} to generalize the main result in  \cite{may}. \end{ur}

\begin{ut}Neither $\dot E(f_a)$ nor $\ddot E(f_a)$ is homeomorphic to $E(f_a)$.\end{ut}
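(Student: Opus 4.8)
The plan is to exploit the fact that being first category in itself is a topological invariant, while proving that the full endpoint set $E(f_a)$ fails to have this property. First I would observe that if $h\colon X\to Y$ is a homeomorphism, then $h$ carries nowhere dense subsets of $X$ onto nowhere dense subsets of $Y$ and preserves countable unions, so $X$ is first category exactly when $Y$ is. By Theorem 4.1, both $\dot E(f_a)$ and $\ddot E(f_a)$ are first category. Hence it suffices to prove that $E(f_a)$ is \emph{not} first category in itself; a homeomorphism onto either escaping endpoint space would then be impossible.

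To show that $E(f_a)$ is not first category, I would use the identification $E(f_a)\simeq\mathfrak E_{\mathrm c}$ recorded in the Introduction and argue that $\mathfrak E_{\mathrm c}$ is a Baire space. The cleanest route is to verify that $\mathfrak E_{\mathrm c}$ is completely metrizable. Writing $\pi_i\colon \ell^2\to\mathbb R$ for the $i$-th coordinate projection, each set $\pi_i^{-1}[\mathbb R\setminus\mathbb Q]$ is a $G_\delta$ subset of $\ell^2$, since the irrationals form a $G_\delta$ subset of $\mathbb R$ and $\pi_i$ is continuous. Therefore $\mathfrak E_{\mathrm c}=\bigcap_{i<\omega}\pi_i^{-1}[\mathbb R\setminus\mathbb Q]$ is a $G_\delta$ subset of the Polish space $\ell^2$, and a $G_\delta$ subset of a completely metrizable space is itself completely metrizable. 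By the Baire category theorem, a nonempty completely metrizable space is a Baire space, and in particular is not first category in itself.

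Combining these observations, $E(f_a)$ is a nonempty Baire space and so not first category, whereas $\dot E(f_a)$ and $\ddot E(f_a)$ are first category by Theorem 4.1. Since first category is preserved under homeomorphism, neither escaping endpoint space can be homeomorphic to $E(f_a)$.

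I expect the only real content to lie in establishing that $E(f_a)\simeq\mathfrak E_{\mathrm c}$ is a Baire space, equivalently that $\mathfrak E_{\mathrm c}$ (the complete Erd\H{o}s space) is topologically complete; everything else, namely the invariance of category under homeomorphism and the appeal to Theorem 4.1, is formal. One could alternatively cite the known completeness of $\mathfrak E_{\mathrm c}$ from the almost zero-dimensionality literature \cite{ov2,erd} in place of the direct $G_\delta$ verification, but the argument above is short enough to include in full.
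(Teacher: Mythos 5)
Your proof is correct and takes essentially the same route as the paper: the paper likewise combines Theorem 4.1 with the observation that $E(f_a)\simeq \mathfrak E_{\mathrm{c}}$ is a $G_\delta$-subset of $\ell^2$, hence completely metrizable, hence not first category by the Baire Category Theorem. The only difference is that you spell out the $G_\delta$ verification (via coordinate projections) that the paper simply recalls in passing.
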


\begin{proof}$E(f_a)$ is completely metrizable (recall $E(f_a)\simeq \mathfrak E_{\mathrm{c}}$, which is a $G_\delta$-subset of $\ell^2$), so by the Baire Category Theorem $E(f_a)$ is not first category. The result now follows from Theorem  4.1.  \end{proof}

\begin{ut}$\ddot E(f_a)\not\simeq \mathfrak E$.\end{ut}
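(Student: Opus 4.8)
The plan is to separate $\ddot E(f_a)$ from $\mathfrak E$ by a descriptive–set–theoretic invariant, since the first category property established in Theorem 4.1 cannot do the job: the rational Hilbert space $\mathfrak E$ is itself first category in itself, so first category is shared by both spaces. The invariant I would use is the absolute Borel class. Recall that for a separable metrizable space the least additive or multiplicative Borel class to which it belongs when embedded in a completely metrizable space is a topological invariant (Kuratowski); in particular the property of being \emph{absolutely $F_\sigma$} is preserved by homeomorphisms. My strategy is to show that $\ddot E(f_a)$ is absolutely $F_\sigma$ while $\mathfrak E$ is not.

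First I would observe that $E(f_a)\simeq \mathfrak E_{\mathrm c}$ is completely metrizable, exactly as already used in the proof of Theorem 4.3. Next, directly from the definition $A(f_a)=\bigcup_{n\geq 0} f_a^{-n}[A_R(f_a)]$ is a countable increasing union of closed subsets of $\mathbb C$, hence $F_\sigma$: the set $A_R(f_a)$ is closed and $f_a$ is continuous, so each $f_a^{-n}[A_R(f_a)]$ is closed. Intersecting with $E(f_a)$ then shows that $\ddot E(f_a)=A(f_a)\cap E(f_a)$ is a relative $F_\sigma$ subset of the Polish space $E(f_a)$. Since an $F_\sigma$ subset of a completely metrizable space is absolutely $F_\sigma$, we conclude that $\ddot E(f_a)$ is absolutely $F_\sigma$.

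It remains to rule out $\mathfrak E$ being absolutely $F_\sigma$. Here I would invoke the known fact that $\mathfrak E$ is $F_{\sigma\delta}$-complete (equivalently $\Pi^0_3$-complete), so that $\mathfrak E$ is not $F_\sigma$ in its completion $\ell^2$ and is therefore not absolutely $F_\sigma$. Comparing the two Borel classes then yields $\ddot E(f_a)\not\simeq \mathfrak E$. I expect the main obstacle to be the careful citation and correct application of this last fact — pinning down that $\mathfrak E$ genuinely lives at the $F_{\sigma\delta}$ level and not lower, together with the precise invariance statement for absolute Borel classes. I would also note that this argument explains the asymmetry flagged in the introduction: the fast escaping set $A(f_a)$ is $F_\sigma$ by construction, whereas the escaping set $I(f_a)$ has strictly higher descriptive complexity, so the same method does not settle whether $\dot E(f_a)\simeq \mathfrak E$.
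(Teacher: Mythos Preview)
Your overall strategy---separating $\ddot E(f_a)$ from $\mathfrak E$ by absolute Borel class---is exactly the paper's approach. However, there is a genuine error in your computation of the class of $\ddot E(f_a)$. You assert that ``an $F_\sigma$ subset of a completely metrizable space is absolutely $F_\sigma$.'' This is false: the irrationals $\mathbb P$ are closed (hence $F_\sigma$) in the Polish space $\mathbb P$ itself, yet $\mathbb P$ is not $F_\sigma$ in $\mathbb R$. The underlying reason is that Lavrentiev's theorem only lets you transport Borel complexity through an ambient $G_\delta$ hull, so a relative $F_\sigma$ in one Polish space becomes, in another, an $F_\sigma$ subset of a $G_\delta$ set---i.e.\ merely $G_{\delta\sigma}$. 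In the standard hierarchy the absolute additive classes begin at $\Sigma^0_3$, not $\Sigma^0_2$.

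The fix is immediate and recovers the paper's argument verbatim: from $A(f_a)\in F_\sigma(\mathbb C)$ and $E(f_a)\in G_\delta(\mathbb C)$ you get $\ddot E(f_a)\in G_{\delta\sigma}(\mathbb C)$, hence $\ddot E(f_a)$ is absolutely $G_{\delta\sigma}$. Your fact that $\mathfrak E$ is $F_{\sigma\delta}$-complete (equivalently, the paper's observation that $\mathfrak E$ contains a closed copy of $\mathbb Q^\omega$) then shows $\mathfrak E$ is not absolutely $G_{\delta\sigma}$, and the conclusion follows. So your plan is correct once the Borel level of $\ddot E(f_a)$ is weakened from $F_\sigma$ to $G_{\delta\sigma}$; that weakening is necessary, and it is precisely what the paper does.
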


\begin{proof}$\ddot E(f_a)$ is an absolute $G_{\delta\sigma}$-space because $A(f_a)$ and $E(f_a)$ are $F_\sigma$ and $G_\delta$ subsets of $\mathbb C$, respectively. On the other hand, $\mathfrak E$ is not  absolute $G_{\delta\sigma}$ because it has a  closed subspace  homeomorphic to $\mathbb Q ^\omega$; see \cite[p.23]{erd}.\end{proof}

\begin{uq}Is $\dot E(f_a)$ homeomorphic to $\mathfrak E$?\end{uq}

\begin{uq}Is $\ddot E(f_a)$ homeomorphic to $\mathbb Q\times \mathfrak E_{\mathrm{c}}$?\end{uq}

\subsection*{\normalfont\textsc{Acknowledgements}}Lasse Rempe-Gillen supplied the graphics for Figure 1.  Philip Rippon indicated a strengthening of Corollary 3.3.  Namely, $J(f)\setminus I(f)$ contains a dense $G_\delta$-subset  of $J(f_a)$ by \cite[Lemma 1]{bak}.

\end{document}